\newcommand{\BR}{\Lambda_n}
\newcommand{\hb}{\hat{\beta}}
\renewcommand{\O}{\ensuremath{{\cal O}}}
\newtheorem{theorem}{THEOREM}
\newtheorem{lemma}{LEMMA}
\begin{document}

\noindent AN ASYMPTOTIC LINEAR REPRESENTATION FOR THE BRESLOW ESTIMATOR
\vskip 3mm

\vskip 5mm
\noindent Hendrik P.~Lopuha\"{a} and Gabriela F.~Nane

\noindent Department of Applied Mathematics

\noindent Delft University of Technology

\noindent Mekelweg 4, 2628 CD, Delft, The Netherlands

\noindent G.F.Nane@tudelft.nl

\vskip 3mm
\noindent Key Words: Cox model; asymptotics; empirical processes.
\vskip 3mm

\vskip 3mm
\noindent Mathematics Subject Classification: 62G20, 62G05, 62N02.
\vskip 3mm

\noindent ABSTRACT

We provide an asymptotic linear representation for the Breslow estimator
of the baseline cumulative hazard function in the Cox model.
Our representation consists of an average of independent random variables and
a term involving the difference between the maximum partial likelihood estimator and the
underlying regression parameter.
The order of the remainder term is arbitrarily close to $n^{-1}$.
\vskip 4mm

\noindent 1.   INTRODUCTION

The proportional hazards model is one of the most popular approaches to model right-censored time to event data in the presence of covariates. Cox (1972) introduced this semiparametric model and focused on estimating the underlying regression coefficients of the covariates. His estimator was later shown (Cox, 1975) to be a maximum partial likelihood estimator and its asymptotic properties were broadly studied (Tsiatis, 1981; Andersen et al., 1993; Oakes, 1977; Slud, 1982). Different functionals of the lifetime distribution are commonly investigated and the (cumulative) hazard function is of particular interest. In the discussion following the Cox's (1972) paper, Breslow proposed a nonparametric maximum likelihood estimator for the baseline cumulative hazard function.
Asymptotic properties of the Breslow estimator, such as consistency and the asymptotic distribution,
were derived by Tsiatis (1981) and Andersen et al. (1993). For an overview of the Breslow estimator, see Lin (2007).

 Estimators in unconditional censorship models such as the Kaplan--Meier and Nelson--Aalen estimators have received considerable attention, especially in the $1980$s.
Established large sample properties include consistency and asymptotic normality (Breslow and Crowley, 1974), rate of strong uniform consistency (Cs\"{o}rg\H{o} and Horv\'{a}th, 1983), strong approximation or Hungarian embedding (Burke et al., 1981), and linearization results (Lo and Singh, 1985).
Lo and Singh (1985) expressed the difference between the Kaplan--Meier estimator and the underlying distribution function
in terms of a sum of independent identically distributed random variables, almost surely,
with a remainder term of the order $n^{-3/4}(\log n)^{3/4}$, with $n$ denoting the sample size; this rate was later improved to $n^{-1}\log n$ by Lo et al. (1989).
To our knowledge, a strong approximation result for the Breslow estimator is unavailable in the literature.
Kosorok (2008) establishes a representation of the Breslow estimator in terms of counting processes.
Although this can be turned into an asymptotic linear representation similar to the one in Lo and Singh (1985),
the covariates are assumed to be in a bounded set and
the remainder term is only shown to be of the order~$o_p(n^{-1/2})$.

In this paper, we derive a similar linearization result for the Breslow estimator, i.e.,
we prove that the difference between the estimator $\Lambda_n$
and the cumulative baseline hazard function $\Lambda_0$
can be represented as a sum of independent random variables and a term involving the difference between the
regression parameter and its maximum partial likelihood estimator.
However, we allow unbounded covariates and we show that the remainder term is of the order $n^{-1}a_n^{-1}$,
where $a_n$ may be any sequence tending to zero.
As~$a_n$ can be chosen to converge to zero arbitrarily slowly, this means that the order of the remainder term is arbitrarily
close to $n^{-1}$.
The proof is based on empirical process theory, which allows the extension of our result
to related semi-parametric models, such as marginal regression models.
Our main motivation is isotonic estimation of the baseline distribution in the Cox model.
An example is the Grenander type estimator $\tilde{\lambda}_n$ for an increasing baseline hazard~$\lambda_0$,
considered in Lopuha\"{a} and Nane (2013),
which is defined as the left-hand slope of the greatest convex minorant of the Breslow estimator.
The limit behavior of $\tilde{\lambda}_n$ at a fixed point~$t_0$ essentially follows from the
limit behavior of the process
\[
t\mapsto n^{2/3}\left\{
(\Lambda_n-\Lambda_0)\left(t_0+n^{-1/3}t\right)
-
(\Lambda_n-\Lambda_0)\left(t_0\right)
\right\}.
\]
In the absence of a strong approximation result for the process $\Lambda_n-\Lambda_0$,
an alternative to obtain the limit process is to apply the results in Kim and Pollard (1990)
to the linear representation for~$\Lambda_n-\Lambda_0$, provided that the remaining terms in the
representation are of order smaller than $n^{-2/3}$.
This cannot be ensured by the representation in Kosorok (2008),
whereas the order~$n^{-1}a_n^{-1}$ can be chosen sufficiently small,
for suitable choices of~$a_n$.
Another application of our linear representation is that,
together with a linear representation for the maximum partial likelihood estimator,
a central limit theorem can be established for $\Lambda_n-\Lambda_0$.
Moreover, such a representation may also provide a means to estimate the variance of the Breslow estimator,
by using plug-in estimators.
A linear representation for the partial maximum likelihood estimator can be deduced
from results in Tsiatis (1981) or Kosorok (2008).

The paper is organized as follows.
The Cox model and the Breslow estimator are introduced in Section 2.
Section 3 is devoted to the main result of the paper and its proof as well as to preparatory lemmas.

\vskip 3mm

\noindent 2. BACKGROUND, NOTATION, AND ASSUMPTIONS

Let $X$ denote a positive random variable representing the survival time of a population of interest. The random variable $C$ denotes the censoring time. Now, define $T=\min(X,C)$ as the generic follow-up time and $\Delta=\{X\leq C\}$ as its corresponding indicator, where $\{\cdot\}$ denotes the indicator function. Suppose that at the beginning of the study, extra information such as sex, age, status of a disease, etc. is recorded for each subject as covariates. Let $Z$ denote a $p$-dimensional covariate vector. Therefore, suppose we observe the following independent, identically distributed triplets
$\left(T_i,\Delta_i,Z_i\right)$, with $i=1,\ldots,n$. The censoring mechanism is assumed to be non-informative. Moreover, given the covariate~$Z$, the survival time~$X$ is assumed to be independent of the censoring time~$C$. The $p$-dimensional covariate vector $Z$ is assumed to be time invariant and non-degenerate.

In the Cox model, the distribution of the survival time is related to the corresponding covariate by
\[
\lambda\left(x\mid z\right)=\lambda_0(x)\,\text{e}^{\beta_0' z},\quad x\in \mathbb{R}^{+},
\]
where $\lambda\left(x\mid z\right)$ is the hazard function for a subject with covariate vector $z\in\mathbbm{R}^p$, $\lambda_0$ represents the underlying baseline hazard function,
and $\beta_0\in \mathbbm{R}^p$ is the vector of the underlying regression coefficients. Conditionally on $Z=z$, the survival time~$X$ is assumed to be a nonnegative random variable, with an absolutely continuous distribution function $F(x \mid z)$ with density $f(x\mid z)$. The same assumptions hold for the censoring variable~$C$ and its distribution function $G$. Let~$H$ be the distribution function of the follow-up time~$T$ and let~$\tau_H=\inf\{t:H(t)=1\}$ be the end point of the support of~$H$. Moreover, let~$\tau_F$ and~$\tau_G$ be the end points of the support of~$F$ and~$G$, respectively.
We employ the usual assumptions for deriving large sample properties of Cox proportional hazards estimators (Tsiatis, $1981$):
\begin{description}
\item (A1)
$\tau_{H}=\tau_G<\tau_F$.
\item (A2)
There exists $\varepsilon>0$ such that
\[
\sup_{|\beta-\beta_0 |\leq \varepsilon} \mathbbm{E}\left[ |Z|^2\, \text{e}^{2\beta' Z}\right]<\infty,
\]
where $|\cdot|$ denotes the Euclidean norm.
\end{description}
Let $X_{(1)}<\cdots<X_{(m)}$ denote the ordered, observed survival times.
Cox (1972, 1975) introduced the proportional hazards model and proposed the partial likelihood estimator $\hb$ as an estimator for the underlying regression coefficients $\beta_0$. Breslow (Cox, 1972) focused on estimating the baseline cumulative hazard function, $\Lambda_0(x)=\int_0^x \lambda_0(u)\,\mathrm{d}u$, and proposed

\begin{equation}
\label{breslow_trad}
\BR(x)=
\sum_{i\mid X_{(i)}\leq x}
\frac{d_i}{\sum_{j=1}^n \{T_j\geq X_{(i)}\}\,\text{e}^{\hb' Z_j}},
\end{equation}
as an estimator for $\Lambda_0$,
where $d_i$ is the number of events at $X_{(i)}$ and $\hb$ is the partial maximum likelihood estimator of the regression coefficients. The estimator $\BR$ is most commonly referred to as the Breslow estimator. Under the assumption of a piecewise constant baseline hazard function and assuming that all the censoring times are shifted to the preceding observed survival time, Breslow showed that the partial maximum likelihood estimator $\hb$ along with the baseline cumulative hazard estimator $\BR$ can be obtained by jointly maximizing the full loglikelihood function.

Let
\begin{equation}
\label{Phi}
\begin{split}
\Phi(\beta,x)
&=
\int \{u\geq x\}\, \text{e}^{\beta' z}\, \mathrm{d}P(u,\delta,z),\\
\Phi_n(\beta,x)
&=
\int \{u\geq x\}\, \text{e}^{\beta' z}\, \mathrm{d}P_n(u,\delta,z),
\end{split}
\end{equation}
where $P$ is the underlying probability measure corresponding to the distribution of $(T,\Delta,Z)$ and $P_n$ is the empirical measure of the triplets $(T_i,\Delta_i,Z_i)$, for $i=1,2,\ldots,n$.
Furthermore, let $H^{uc}(x)=\mathbbm{P}(T\leq x, \Delta=1)$ be the sub-distribution function of the uncensored observations.
Then, using the derivations in Tsiatis (1981), it can be deduced that
\begin{equation}\label{base_haz}
\lambda_0(u)=\frac{ \mathrm{d}H^{uc}(u)/\mathrm{d}u }{\Phi(\beta_0,u)}.
\end{equation}
Consequently, it can be derived that
\begin{equation}
\label{cum_base_haz}
\Lambda_0(x)=\int \frac{\delta\{ u \leq x\}}{\Phi(\beta_0,u)}\,\mathrm{d}P(u,\delta,z).
\end{equation}
From (A1) it follows that $\Lambda_0(\tau_H)<\infty$.
An intuitive baseline cumulative hazard function estimator is obtained
by replacing $\Phi$ in \eqref{cum_base_haz} by $\Phi_n$ and by plugging in $\hb$,
which yields exactly the Breslow estimator in \eqref{breslow_trad},
\begin{equation}
\label{Breslow}
\Lambda_n(x)=\int \frac{\delta\{u\leq x\}}{\Phi_n(\hb,u)}\ \mathrm{d}P_n(u,\delta,z).
\end{equation}
Kosorok (2008) established strong uniform consistency for the Breslow estimator and the process convergence of $\sqrt{n}(\BR-\Lambda_0)$,
yet under the strong assumption of bounded covariates.
Using standard empirical processes methods, Lopuha\"{a} and Nane (2013) established
strong uniform consistency at rate $n^{-1/2}$ for the Breslow estimator under the relatively mild conditions (A1) and (A2).

\vskip 3mm

\noindent 3.  ASYMPTOTIC REPRESENTATION

The following two lemmas will be used in proving the main result of the paper.
\begin{lemma}
\label{lemma:rate Phin}
Suppose that condition (A2) holds and let $\Phi_n$ and $\Phi$ be defined in~\eqref{Phi}.
With $\varepsilon>0$ taken from (A2), for $|\beta-\beta_0|<\varepsilon$, let
\begin{equation}
\label{def:D}
\begin{split}
D^{(1)}(\beta,x)
&=
\frac{\partial\Phi(\beta,x)}{\partial\beta}
=
\int\{ u\geq x \}\, z \, \text{e}^{\beta' z}\, \mathrm{d} P(u,\delta,z)\,\in \mathbbm{R}^p,\\
D_n^{(1)}(\beta,x)
&=
\frac{\partial\Phi_n(\beta,x)}{\partial\beta}
=
\int\{ u\geq x \}\, z \, \text{e}^{\beta' z}\, \mathrm{d}P_n(u,\delta,z)\,\in \mathbbm{R}^p.
\end{split}
\end{equation}
Then,
\begin{equation}
\label{eq:rate Phin}
\begin{split}
\sqrt{n}\sup_{x\in\mathbbm{R}}\left| \Phi_n(\beta_0,x)-\Phi(\beta_0,x) \right|&=\O_p(1),\\
\sqrt{n}\sup_{x\in\mathbbm{R}}\left| D_n^{(1)}(\beta_0,x)-D^{(1)}(\beta_0,x) \right|&=\O_p(1).
\end{split}
\end{equation}
\end{lemma}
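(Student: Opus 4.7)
The plan is to recognize both suprema as empirical process suprema over suitably indexed function classes, and then show that these classes are $P$-Donsker. The $\sqrt{n}$-rate then follows immediately from the functional central limit theorem applied to $\sqrt{n}(P_n - P)$ over such a class.

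For the first assertion I would rewrite
\[
\Phi_n(\beta_0,x) - \Phi(\beta_0,x) = (P_n - P) f_x, \qquad f_x(u,\delta,z) = \{u \geq x\}\,\text{e}^{\beta_0' z},
\]
so that the first supremum in \eqref{eq:rate Phin} equals $\sqrt{n}\,\|P_n-P\|_{\mathcal{F}_1}$ with $\mathcal{F}_1=\{f_x : x \in \mathbbm{R}\}$. The indicator maps $(u,\delta,z) \mapsto \{u \geq x\}$ form a VC-subgraph class indexed by half-lines, and multiplication by the fixed nonnegative function $\text{e}^{\beta_0' z}$ preserves the VC-subgraph property. A natural envelope is $F_1(u,\delta,z) = \text{e}^{\beta_0' z}$, and by (A2) taken at $\beta=\beta_0$ we have $\mathbbm{E}[F_1^2] = \mathbbm{E}[\text{e}^{2\beta_0' Z}] < \infty$. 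Standard results on Donsker classes of VC-subgraph type with square-integrable envelope (see, e.g., van der Vaart and Wellner, 1996) then give that $\mathcal{F}_1$ is $P$-Donsker, and hence $\sqrt{n}\,\|P_n - P\|_{\mathcal{F}_1} = \O_p(1)$.

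For the second bound I would argue componentwise. For each $k=1,\ldots,p$, the class
\[
\mathcal{F}_{2,k} = \bigl\{ (u,\delta,z) \mapsto \{u \geq x\}\, z_k\, \text{e}^{\beta_0' z} : x \in \mathbbm{R} \bigr\}
\]
is again a product of the VC-subgraph class of half-line indicators with a fixed measurable function, with envelope $(u,\delta,z)\mapsto |z|\,\text{e}^{\beta_0' z}$. Condition (A2) supplies $\mathbbm{E}[|Z|^2\,\text{e}^{2\beta_0' Z}] < \infty$, so the envelope is square-integrable; each $\mathcal{F}_{2,k}$ is therefore $P$-Donsker. Combining the $p$ coordinates via the triangle inequality and the equivalence of coordinate and Euclidean norms yields the second line of \eqref{eq:rate Phin}.

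The only place that deserves any care is checking that the VC-subgraph property survives multiplication by the unbounded weight $\text{e}^{\beta_0' z}$ (respectively $z_k\text{e}^{\beta_0' z}$); this is essentially immediate because the subgraph of $(u,\delta,z,t)\mapsto\{u\geq x\}\,g(u,\delta,z) - t$ for any fixed $g\geq 0$ is obtained from the subgraph of $\{u\geq x\}-t$ by a coordinatewise rescaling in $t$, so the shattering combinatorics are unchanged and the VC index is the same. Hence no boundedness of $Z$ is needed beyond the moment condition in (A2).
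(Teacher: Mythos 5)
Your proof is correct and follows essentially the same route as the paper: both identify the suprema as empirical process norms over the classes $\{(u,\delta,z)\mapsto\{u\geq x\}\,\mathrm{e}^{\beta_0'z}\}$ and (coordinatewise) $\{(u,\delta,z)\mapsto\{u\geq x\}\,z_k\,\mathrm{e}^{\beta_0'z}\}$, invoke the preservation of the VC-subgraph property under multiplication by a fixed function (Lemma 2.6.18 in van der Vaart and Wellner, 1996), verify square integrability of the envelopes via (A2), and conclude by the Donsker property. The only difference is cosmetic: you sketch why the VC-subgraph property survives the unbounded weight, whereas the paper simply cites the lemma.
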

\begin{proof}
Consider the class of functions $\mathcal{G}=\{g(u,z;x): x\in\mathbb{R}\}$,
where, for each $x\in\mathbb{R}$ and $\beta_0\in\mathbbm{R}^p$ fixed,
\[
g(u,z;x)=\{u\geq x\}\exp(\beta_0'z)
\]
is a product of an indicator and a fixed function.
It follows that $\mathcal{G}$ is a Vapnik--\u{C}ervonenkis (VC)-subgraph class (Lemma 2.6.18 in van der Vaart and Wellner, 1996)
and its envelope $G=\exp(\beta_0'z)$ is square integrable under condition~(A2).
Standard results from empirical process theory (van der Vaart and Wellner, 1996) yield that the class of functions $\mathcal{G}$ is a Donsker class, i.e.,
\[
\sqrt{n}\int g(u,z;x)\,\mathrm{d}(P_n-P)(u,\delta,z)=\O_p(1),
\]
so that the first statement in~\eqref{eq:rate Phin} follows by the continuous mapping theorem.
To prove the second statement, it suffices to consider each $j$th coordinate, for $j=1,\ldots,p$, fixed.
In this case, we deal with the class~$\mathcal{G}_j=\{g_j(u,z;x): x\in\mathbb{R}\}$, where
\[
g_j(u,z;x)=\{u\geq x\}z_j\exp(\beta_0'z).
\]
From here the argument is exactly the same, which proves the lemma.
\end{proof}
\begin{lemma}
\label{lemma:jon}
Assume (A1) and (A2).
Then, for all $M\in(0,\tau_H)$,
\[
a_n n\sup_{x\in[0,M]}
\left|
\int
\delta
\{u\leq x\}
\left( \frac{1}{\Phi_n(\beta_0,u)}-\frac{1}{\Phi(\beta_0,u)} \right)\mathrm{d}(P_n-P)(u,\delta,z)\right|
=\O_p(1),
\]
for any sequence $a_n=o(1)$.
\end{lemma}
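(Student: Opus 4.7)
The plan is to Taylor-expand $1/\Phi_n(\beta_0,\cdot)-1/\Phi(\beta_0,\cdot)$ around $\Phi(\beta_0,\cdot)$, recognise the leading term as a degenerate bilinear functional of $P_n-P$, and bound its supremum over $x\in[0,M]$ by a maximal inequality for degenerate $U$-processes. Since $M<\tau_H$ forces $\Phi(\beta_0,M)>0$ and Lemma~\ref{lemma:rate Phin} supplies uniform convergence of $\Phi_n(\beta_0,\cdot)$ to $\Phi(\beta_0,\cdot)$, the event $A_n=\{\inf_{u\in[0,M]}\Phi_n(\beta_0,u)\geq\tfrac12\Phi(\beta_0,M)\}$ satisfies $P(A_n)\to1$, so it is enough to argue on $A_n$. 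There we have the identity
\begin{equation*}
\frac{1}{\Phi_n(\beta_0,u)}-\frac{1}{\Phi(\beta_0,u)}
=-\frac{\Phi_n(\beta_0,u)-\Phi(\beta_0,u)}{\Phi(\beta_0,u)^2}
+\frac{(\Phi_n(\beta_0,u)-\Phi(\beta_0,u))^2}{\Phi(\beta_0,u)^2\,\Phi_n(\beta_0,u)}.
\end{equation*}
By Lemma~\ref{lemma:rate Phin} the remainder factor is $\O_p(n^{-1})$ uniformly on $[0,M]$; combined with $|(P_n-P)f|\leq 2\|f\|_\infty$ and $|\delta\{u\leq x\}|\leq1$, its integral against $d(P_n-P)$ is $\O_p(n^{-1})$ uniformly in $x$, which is absorbed by the $a_n^{-1}$ slack.

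For the leading term, substitute $\Phi_n(\beta_0,u)-\Phi(\beta_0,u)=\int\{v\geq u\}e^{\beta_0'z'}\,d(P_n-P)(v,\delta',z')$ and apply Fubini (justified by the square-integrable envelope below) to obtain
\begin{equation*}
T_n(x):=-\int\delta\{u\leq x\}\,\frac{\Phi_n(\beta_0,u)-\Phi(\beta_0,u)}{\Phi(\beta_0,u)^2}\,d(P_n-P)(u,\delta,z)
=-\iint K_x\,d(P_n-P)\otimes d(P_n-P),
\end{equation*}
with kernel $K_x((u,\delta,z),(v,\delta',z'))=\delta\{u\leq x\}\{v\geq u\}e^{\beta_0'z'}/\Phi(\beta_0,u)^2$. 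Expanding the product $d(P_n-P)\otimes d(P_n-P)$ and noting that single-variable marginals vanish when paired with $(P_n-P)$, the quantity $T_n(x)$ reduces, up to a diagonal contribution of order $n^{-1}$, to a completely degenerate second-order $U$-statistic evaluated at the Hoeffding projection $\tilde K_x$ of $K_x$. The indexing family $\{K_x:x\in[0,M]\}$ is VC-subgraph, being the product of the VC class $\{\{u\leq x\}\}$ with the fixed measurable factor $\{v\geq u\}e^{\beta_0'z'}/\Phi(\beta_0,u)^2$; on $[0,M]$ it admits the envelope $\Phi(\beta_0,M)^{-2}e^{\beta_0'z'}$, square-integrable under~(A2).

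A uniform maximal inequality for degenerate $U$-processes indexed by a VC-subgraph class with an $L^2$ envelope (see, e.g., Nolan and Pollard, 1987; Arcones and Gin\'e, 1993) then yields $\sup_{x\in[0,M]} n\,|U_n\tilde K_x|=\O_p(1)$, so $\sup_x|T_n(x)|=\O_p(n^{-1})$. Combining with the remainder bound gives $\sup_{x\in[0,M]}|I_n(x)|=\O_p(n^{-1})$, whence $a_n n\sup_x|I_n(x)|=\O_p(a_n)=\O_p(1)$, as required. The main obstacle is obtaining the \emph{sharp} $\O_p(n^{-1})$ rate for the $U$-process: an extraneous logarithmic factor would be fatal, because for arbitrary $a_n=o(1)$ the quantity $a_n\log n$ need not remain bounded. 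Verifying the VC-subgraph property and the square-integrable envelope under~(A2) is exactly what unlocks the sharp degenerate $U$-process inequality.
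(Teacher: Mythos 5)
Your argument is correct in outline but takes a genuinely different route from the paper. The paper never linearizes $1/\Phi_n-1/\Phi$: it embeds the random class $\{\delta\{u\leq x\}(1/\Phi_n(\beta_0,u)-1/\Phi(\beta_0,u)):x\in[0,M]\}$, with probability tending to one, into a \emph{fixed} class $\mathcal{G}_{n,M,\alpha}$ of functions $\delta\{u\leq y\}\left(1/\Psi(u)-1/\Phi(\beta_0,u)\right)$ with $\Psi$ monotone, bounded below by $K=\Phi(\beta_0,M)/2$ and satisfying $\sup_{[0,M]}|\Psi-\Phi(\beta_0,\cdot)|\leq\alpha$; this class has bracketing entropy of order $1/\varepsilon$ and, crucially, the \emph{constant envelope} $\alpha/K^2$, so Theorem 2.14.2 of van der Vaart and Wellner gives $\mathbb{E}\sup_x|\sqrt{n}(P_n-P)g|=\O(\alpha)$. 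The factor $a_n^{-1}$ in the statement is exactly the price of guaranteeing $\sup_{[0,M]}|\Phi_n-\Phi|\leq\alpha=n^{-1/2}a_n^{-1}$ with probability tending to one. You instead expand $1/\Phi_n-1/\Phi$, dispose of the quadratic remainder by the total-variation bound, and treat the leading term as a completely degenerate second-order $U$-process indexed by the VC family $\{K_x\}$; the Nolan--Pollard/Arcones--Gin\'e maximal inequalities then yield the sharper bound $\O_p(n^{-1})$ with no $a_n^{-1}$ loss, which more than implies the lemma, and you correctly identify that any logarithmic loss here would be fatal for arbitrary $a_n=o(1)$. The trade-off is that your argument is stronger but imports degenerate $U$-process machinery external to the paper's toolbox (and the steps you mention only in passing --- that the Hoeffding projection preserves the entropy and square-integrable-envelope conditions, and that the diagonal contribution is $\O_p(n^{-1})$ uniformly in $x$ --- do need to be written out), whereas the paper's argument stays within first-order empirical process theory at the cost of the arbitrarily slowly vanishing factor $a_n$.
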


\begin{proof}
Consider the class of functions
$\mathcal{F}_n=\left\{ f_n(u,\delta,z;x): 0\leq x\leq M \right\}$,
where
\[
f_n(u,\delta,z;x)
=
\delta
\{u\leq x\}
\left( \frac{1}{\Phi_n(\beta_0,u)}-\frac{1}{\Phi(\beta_0,u)}\right).
\]
Correspondingly, consider the class $\mathcal{G}_{n,M,\alpha}$ consisting of functions
\[
g(u,\delta,z;y,\Psi)=
\delta\{u\leq y\}
\left( \frac{1}{\Psi(u)}-\frac{1}{\Phi(\beta_0,u)} \right),
\]
where $0\leq y \leq M$ and $\Psi$ is nonincreasing left continuous, such that
\[
\Psi(M)
\geq
K,
\quad
\sup_{u\in[0,M]}\left| \Psi(u)-\Phi(\beta_0,u) \right|\leq \alpha,
\]
where $K=\Phi(\beta_0,M)/2$.
Then, for any $\alpha>0$, we have $\mathbbm{P}(\mathcal{F}_n\subset \mathcal{G}_{n,M,\alpha}) \to 1$,
by Lemma~\ref{lemma:rate Phin}.
Furthermore, the class $\mathcal{G}_{n,M,\alpha}$ has envelope $G(u,\delta,z)=\alpha/K^2$.
Since the functions in $\mathcal{G}_{n,M,\alpha}$ are
 products of indicators and a difference of bounded monotone functions,
its entropy with bracketing satisfies
\[
\log N_{[\,]}(\varepsilon,\mathcal{G}_{n,M,\alpha},L_2(P))\lesssim \frac{1}{\varepsilon},
\]
see e.g., Theorem~2.7.5 in van der Vaart and Wellner (1996) and Lemma~9.25 in Kosorok (2008).
Hence, for any $\delta>0$, the bracketing integral
\[
J_{[\,]}(\delta,\mathcal{G}_{n,M,\alpha},L_2(P))=\int_0^\delta\sqrt{1+\log N_{[\,]}(\varepsilon\|G\|_2,\mathcal{G}_{n,M,\alpha},L_2(P))}\,\mathrm{d}\varepsilon
<\infty.
\]
By Theorem~2.14.2 in van der Vaart and Wellner (1996), we have
\[
\begin{split}
\mathbb{E}\left\|\sqrt{n}\int g(u,\delta,z;y,\Psi)\mathrm{d}(P_n-P)(u,\delta,z) \right\|_{\mathcal{G}_{n,M,\alpha}}
&\leq
J_{[\,]}(1,\mathcal{G}_{n,M,\alpha},L_2(P))\|G\|_{P,2}
=
\O(\alpha),
\end{split}
\]
where $\|\cdot\|_{\mathcal{F}}$ denotes the supremum over the class of functions $\mathcal{F}$.
Now, let $a_n=o(1)$.
Then, according to~\eqref{eq:rate Phin},
\[
a_n\sqrt{n}\sup_{x\in\mathbbm{R}}\left| \Phi_n(\beta_0,x)-\Phi(\beta_0,x) \right|=o_p(1).
\]
Therefore, if we choose $\alpha=n^{-1/2}a_n^{-1}$, this gives
\[
\mathbb{E}
\left\|
\int g(u,\delta,z;y,\Psi)\mathrm{d}(P_n-P)(u,\delta,z)
\right\|_{\mathcal{G}_{n,M,\alpha}}
=
\O((na_n)^{-1})
\]
and hence, by the Markov inequality, this proves the lemma.
\end{proof}

The asymptotic linear representation of the Breslow estimator is provided by the next theorem.
\begin{theorem}
\label{theorem:linearization}
Assume (A1) and (A2).
Let~$\Phi$ and~$D^{(1)}$ be defined in~\eqref{Phi} and~\eqref{def:D}.
Then, for all $M\in(0,\tau_H)$ and $x\in[0,M]$,
\[
\BR(x)-\Lambda_0(x)
=
\frac{1}{n}\sum_{i=1}^n \xi(T_i,\Delta_i,Z_i;x)+
(\hb-\beta_0)'A_0(x)
+R_n(x),
\]
where $\hb$ is the maximum partial likelihood estimator,
\begin{equation}
\label{def:A_0}
A_0(x)=
\int_0^x\frac{D^{(1)}(\beta_0,u)}{\Phi(\beta_0,u)}\lambda_0(u)\,\mathrm{d}u
\end{equation}
and
\[
\xi(t,\delta,z;x)
=
-\mathrm{e}^{\beta_0'z}\int_0^{x\wedge t} \frac{\lambda_0(u)}{\Phi(\beta_0,u)}\,\mathrm{d}u
+
\frac{\delta\{t\leq x\}}{\Phi(\beta_0,t)}
\]
and $R_n$ is such that
\[
\sup_{ x\in[0,M]}
\left|R_n(x)\right|=\O_p(n^{-1}a_n^{-1}),
\]
for any sequence $a_n=o(1)$.
\end{theorem}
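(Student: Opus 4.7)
\quad
The plan is to linearize $\BR$ about $(\beta_0,P)$ in two stages: a first-order Taylor expansion in $\beta$ to extract the $\hb-\beta_0$ contribution, followed by a replacement of $\Phi_n$ by $\Phi$ in the remaining $\beta_0$-evaluated integral, using Lemmas~\ref{lemma:rate Phin} and~\ref{lemma:jon} to control every error by at most $\O_p(n^{-1}a_n^{-1})$. Taylor expanding $\beta\mapsto 1/\Phi_n(\beta,u)$ about $\beta_0$ and integrating against $\delta\{u\le x\}\,\mathrm{d}P_n$ produces
\[
\BR(x)
=
\int\frac{\delta\{u\le x\}}{\Phi_n(\beta_0,u)}\,\mathrm{d}P_n
-(\hb-\beta_0)'\int\frac{\delta\{u\le x\}\,D_n^{(1)}(\beta_0,u)}{\Phi_n(\beta_0,u)^2}\,\mathrm{d}P_n
+T_n(x),
\]
whose Taylor remainder satisfies $\sup_{x\in[0,M]}|T_n(x)|=\O_p(|\hb-\beta_0|^2)=\O_p(n^{-1})$, since the second derivative of $1/\Phi_n(\beta,u)$ is uniformly bounded on $[0,M]\times\{|\beta-\beta_0|\le\varepsilon\}$ by (A2) together with Lemma~\ref{lemma:rate Phin}.

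For the linear-in-$(\hb-\beta_0)$ piece I would show that the inner integral equals $A_0(x)+\O_p(n^{-1/2})$ uniformly in $x\in[0,M]$, after which multiplication by $\hb-\beta_0=\O_p(n^{-1/2})$ absorbs the error into $R_n$ and leaves the announced $A_0$-term (up to sign). This follows by decomposing
\[
\frac{D_n^{(1)}(\beta_0,u)}{\Phi_n(\beta_0,u)^2}
=
\frac{D^{(1)}(\beta_0,u)}{\Phi(\beta_0,u)^2}
+\frac{D_n^{(1)}-D^{(1)}}{\Phi_n^{2}}
+D^{(1)}\Bigl(\frac{1}{\Phi_n^{2}}-\frac{1}{\Phi^{2}}\Bigr);
\]
the last two summands are $\O_p(n^{-1/2})$ uniformly on $[0,M]$ by Lemma~\ref{lemma:rate Phin} and the lower bound $\Phi(\beta_0,\cdot)\ge\Phi(\beta_0,M)>0$, while replacing $\mathrm{d}P_n$ by $\mathrm{d}P$ in the leading term costs a further $\O_p(n^{-1/2})$ by a standard Donsker argument on the bounded class $\{\delta\{u\le x\}D^{(1)}(\beta_0,u)/\Phi(\beta_0,u)^2:x\in[0,M]\}$.

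It remains to handle $\int\delta\{u\le x\}/\Phi_n(\beta_0,u)\,\mathrm{d}P_n$, which I would split as
\[
\int\frac{\delta\{u\le x\}}{\Phi(\beta_0,u)}\,\mathrm{d}P_n
+\int\delta\{u\le x\}\Bigl(\frac{1}{\Phi_n}-\frac{1}{\Phi}\Bigr)\mathrm{d}(P_n-P)
+\int\delta\{u\le x\}\Bigl(\frac{1}{\Phi_n}-\frac{1}{\Phi}\Bigr)\mathrm{d}P.
\]
The first integral equals $\frac{1}{n}\sum_i\Delta_i\{T_i\le x\}/\Phi(\beta_0,T_i)$, supplying the $\delta\{t\le x\}/\Phi(\beta_0,t)$ part of $\xi$; the second is precisely the object bounded by Lemma~\ref{lemma:jon} and contributes $\O_p(n^{-1}a_n^{-1})$. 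For the third I substitute $1/\Phi_n-1/\Phi=(\Phi-\Phi_n)/(\Phi_n\Phi)$, approximate $1/(\Phi_n\Phi)$ by $1/\Phi^{2}$ modulo an $\O_p(n^{-1})$ error via Lemma~\ref{lemma:rate Phin}, swap the order of integration, and invoke $\mathrm{d}H^{uc}(u)=\Phi(\beta_0,u)\lambda_0(u)\,\mathrm{d}u$ to obtain $-\frac{1}{n}\sum_j\mathrm{e}^{\beta_0'Z_j}\int_0^{x\wedge T_j}\lambda_0(u)/\Phi(\beta_0,u)\,\mathrm{d}u+\Lambda_0(x)$. The sample-average part completes $\xi$, and the residual $\Lambda_0(x)$ cancels the $-\Lambda_0(x)$ on the left-hand side of the desired identity.

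The main obstacle is extracting the sharp rate $\O_p(n^{-1}a_n^{-1})$ from Lemma~\ref{lemma:jon}: the bracketing-entropy bound $\log N_{[\,]}\lesssim 1/\varepsilon$ on a class indexed simultaneously by $x$ and the random function $1/\Phi_n$, combined with $\sup|\Phi_n-\Phi|=\O_p(n^{-1/2})$ from Lemma~\ref{lemma:rate Phin}, is exactly what upgrades the crude $o_p(n^{-1/2})$ bound implicit in Kosorok~(2008) to $\O_p(n^{-1}a_n^{-1})$. Everything else is Taylor bookkeeping, CLT-type statements from Lemma~\ref{lemma:rate Phin}, and Fubini-type cancellation via the identity $\lambda_0\Phi(\beta_0,\cdot)=\mathrm{d}H^{uc}/\mathrm{d}u$.
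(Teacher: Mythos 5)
Your proposal follows essentially the same route as the paper: the same Taylor expansion in $\beta$ with the coefficient $A_n(x)$ shown to equal $A_0(x)+\O_p(n^{-1/2})$ uniformly, the same algebraic split of $\BR(\beta_0,x)-\Lambda_0(x)$ into a leading i.i.d.\ average, a $(P_n-P)$-cross term controlled by Lemma~\ref{lemma:jon}, and a quadratic term controlled by Lemma~\ref{lemma:rate Phin}, and the same use of $\delta\,\mathrm{d}P/\Phi(\beta_0,u)=\lambda_0(u)\,\mathrm{d}u$ to identify $\xi$ and cancel $\Lambda_0(x)$. The sign ambiguity you flag in the $(\hb-\beta_0)'A_0(x)$ term is present in the paper itself (the minus sign in its expansion~\eqref{eq:expansion} disappears by~\eqref{d_n,1}), so your bookkeeping agrees with the paper's actual computation.
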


\begin{proof}
For $\beta\in \mathbbm{R}^p$, define
\[
\BR(\beta,x)= \int \frac{\delta\{u\leq x\}}{\Phi_n(\beta,u)}\,\mathrm{d}P_n(u,\delta,z).
\]
Hence, the Breslow estimator in \eqref{Breslow} can also be written as $\BR(\hb,x)$.
For $x\in[0,M]$, consider the following decomposition
\[
\BR(x)-\Lambda_0(x)
=
T_{n1}(x)+T_{n2}(x),
\]
where $T_{n1}(x)=\BR(\hb,x)-\BR(\beta_0,x)$ and
$T_{n2}(x)=\BR(\beta_0,x)-\Lambda_0(x)$.

For the term $T_{n1}$, first notice that a Taylor expansion of $\BR(\cdot,x)$ around $\beta_0$ yields that
\begin{equation}
\label{eq:expansion}
\BR(\hb,x)-\BR(\beta_0,x)
=
-\big(\hb-\beta_0\big)'A_{n}(x)
+
\frac{1}{2}\big(\hb-\beta_0\big)' R_{n1}(x)\big(\hb-\beta_0\big),
\end{equation}
where the vector $A_n$ and matrix $R_{n1}$ are given by
\begin{align}
\label{a_n}
A_{n}(x)
&=
\int \delta\{u\leq x\}\frac{D^{(1)}_n(\beta_0,u)}{\Phi^2_n(\beta_0,u)}\ \mathrm{d}P_n(u,\delta,z),\\
R_{n1}(x)
&=
\int \delta\{u\leq x\}
\frac{2 D_n^{(1)}(\beta^*,u)D_n^{(1)}(\beta^*,u)'-D_n^{(2)}(\beta^*,u)\Phi_n(\beta^* ,u)}{\Phi^3_n(\beta^*,u)}\,
\mathrm{d}P_n(u,\delta,z),\nonumber
\end{align}
for some $|\beta^*-\beta_0|\leq |\hb-\beta_0|$,
with $D_n^{(1)}$ as defined in~\eqref{def:D} and
\[
D_n^{(2)}(\beta,x)
=
\frac{\partial^2\Phi_n(\beta,x)}{\partial\beta^2}
=
\int\{ u\geq x \}\, zz' \, \text{e}^{\beta' z}\, \mathrm{d}P_n(u,\delta,z)\,\in \mathbbm{R}^p\times\mathbbm{R}^p.
\]
We define $D^{(2)}(\beta,x)$ similarly, with $P_n$ replaced by $P$. \\
According to (A2), we have
$|D^{(1)}(\beta_0,x)|\leq \mathbb{E}\left[|Z|\exp(\beta_0'Z)\right]<\infty$,
for all $x\in \mathbb{R}$, and similarly
\[
|D_{n}^{(1)}(\beta_0,x)|
\leq
\frac1n\sum_{i=1}^n
|Z_i|\text{e}^{\beta_0'Z_i}
\to
\mathbb{E}
\left[
|Z|\text{e}^{\beta_0'Z}
\right]
<\infty,
\]
with probability one.
Likewise, $|D^{(2)}(\beta_0,x)|<\infty$ and
\[
|D_{n}^{(2)}(\beta_0,x)|
\leq
\frac1n\sum_{i=1}^n
|Z_i|^2\text{e}^{\beta_0'Z_i}
\to
\mathbb{E}
\left[
|Z|^2\text{e}^{\beta_0'Z}
\right]<\infty,
\]
with probability one.
Furthermore, for all $x\in[0,M]$,
\[
0<\Phi(\beta_0,M)\leq \Phi(\beta_0,x)\leq \Phi(\beta_0,0)=\mathbb{E}\left[\text{e}^{\beta_0'Z}\right]<\infty
\]
and $\Phi_n(\beta_0,M)\leq \Phi_n(\beta_0,x)\leq \Phi_n(\beta_0,0)$, where $\Phi_n(\beta_0,M)\to\Phi(\beta_0,M)$ and $\Phi_n(\beta_0,0)\to\Phi(\beta_0,0)$, with probability one.
It follows that there exist constants $K_1, K_2>0$, such that for all $x\in[0,M]$,
\begin{equation}
\label{eq:D1D2Phi bound}
|D^{(1)}(\beta_0,x)|\leq K_2,
\quad
|D^{(2)}(\beta_0,x)|\leq K_2,
\quad
K_1\leq \Phi(\beta_0,x)\leq K_2
\end{equation}
and for $n$ sufficiently large,
\begin{equation}
\label{eq:D1D2Phin bound}
|D_n^{(1)}(\beta_0,x)|\leq K_2,
\quad
|D_n^{(2)}(\beta_0,x)|\leq K_2,
\quad
K_1\leq \Phi_n(\beta_0,x)\leq K_2,
\end{equation}
with probability one.
According to~\eqref{base_haz},
\begin{equation}
\label{eq:relationPlambda}
\frac{\delta}{\Phi(\beta_0,u)}\ \mathrm{d}P(u,\delta,y)
=
\frac{\mathrm{d}H^{uc}(u)}{\Phi(\beta_0,u)}
=
\lambda_0(u)\,\mathrm{d}u,
\end{equation}
so that $A_0$, as defined in~\eqref{def:A_0}, is equal to
\[
A_0(x)
=
\int
\delta\{ u\leq x\}\frac{D^{(1)}(\beta_0,u)}{\Phi^2(\beta_0,u)}\,\mathrm{d}P(u,\delta,z)\in \mathbb{R}^{p},
\]
Then, for the $A_{n}$ term in~\eqref{eq:expansion}, it can be deduced that
\[
\begin{split}
\sup_{0\leq x\leq M}
|A_{n}(x)-A_0(x)|
&\leq
\sup_{0\leq u\leq M}
\left|
\frac{D_n^{(1)}(\beta_0,u)}{\Phi_n^2(\beta_0,u)}-\frac{D^{(1)}(\beta_0,u)}{\Phi^2(\beta_0,u)}
\right|
\\
&\qquad+
\sup_{0\leq x\leq M}
\left|
\int\delta\{u\leq x\}
\frac{D^{(1)}(\beta_0,u)}{\Phi^2(\beta_0,u)}
\mathrm{d}(P_n-P)(u,\delta,z)
\right|.
\end{split}
\]
By~\eqref{eq:D1D2Phi bound} and~\eqref{eq:D1D2Phin bound}, the first term on the right hand side is bounded by
\[
\frac{1}{K_1^2}
\sup_{0\leq x\leq M}
\left|D_n^{(1)}(\beta_0,x)-D^{(1)}(\beta_0,x)\right|
+
\frac{2K_2^2}{K_1^4}
\sup_{0\leq x\leq M}
\left|\Phi_n(\beta_0,x)-\Phi(\beta_0,x)\right|,
\]
which is of the order $\O_p(n^{-1/2})$, by Lemma~\ref{lemma:rate Phin}.
For the second term on the right hand side, for each $j=1,\hdots,p$, fixed,
consider the class $\mathcal{G}_j=\{g_j(u,\delta;x):x\in[0,M]\}$,
consisting of functions
\[
g_j(u,\delta;x)
=
\delta\{u\leq x\} \frac{D_j^{(1)}(\beta_0,u)}{\Phi^2(\beta_0,u)},
\]
where $D_j^{(1)}$ denotes the $j$th coordinate of $D^{(1)}$.
Now, each $g_j(u,\delta;x)$ is the product of indicators and a fixed uniformly bounded function.
Standard results from empirical process theory (van der Vaart and Wellner, 1996) give that the class $\mathcal{G}_j$ is Donsker.
As in the proof of Lemma~\ref{lemma:rate Phin}, we find that for every $j=1,\hdots,p$,
\[
\sqrt{n}
\sup_{0\leq x\leq M}
\left|
\int g_j(u,\delta;x)\,\mathrm{d}(P_n-P)(u,\delta,z)
\right|=\O_p(1).
\]
It follows that
\[
\sup_{0\leq x\leq M}
|A_{n}(x)-A_0(x)|
=
\O_p(n^{-1/2}).
\]
and we can conclude that
\[
\big(\hb-\beta_0\big)' A_{n}(x)
=
\big(\hb-\beta_0\big)'A_0(x)
+
R_{n2}(x),
\]
where $R_{n2}(x)=\O_p(n^{-1})$, uniformly for $x\in[0,M]$, since $\hb-\beta_0=\O_p(n^{-1/2})$ (Tsiatis, 1981).
For the term containing $R_{n1}$, first observe that,
according to~\eqref{eq:D1D2Phin bound},
for $n$ sufficiently large,
\[
\sup_{u\in[0,M]}\left|
\frac{2 D_n^{(1)}(\beta^*,u)D_n^{(1)}(\beta^*,u)'-D_n^{(2)}(\beta^*,u)\Phi_n(\beta^* ,u)}{\Phi^3_n(\beta^*,u)}
\right|=\O(1),
\]
almost surely, so that
\[
\sup_{0\leq x\leq M}
\left|\frac{1}{2}
\big(\hb-\beta_0\big)'
R_{n1}(x)
\big(\hb-\beta_0\big)\right|
=
\O_p(n^{-1}).
\]
Concluding,
\begin{equation}
\label{d_n,1}
T_{n1}(x)
=
\big(\hb-\beta_0\big)'A_0(x)
+
\O_p(n^{-1}),
\end{equation}
uniformly in $x\in[0,M]$.
Proceeding with $T_{n2}$, write
\[
T_{n2}(x)
=
\BR(\beta_0,x)-\Lambda_0(x)
=
B_n(x)+C_n(x)+R_{n3}(x)+R_{n4}(x),
\]
where
\[
\label{def:AB}
\begin{split}
B_n(x)
&=
\int \delta\{u\leq x\} \frac{\Phi(\beta_0,u)-\Phi_n(\beta_0,u)}{\Phi^2(\beta_0,u)}\, \mathrm{d}P(u,\delta,z), \\
C_n(x)
&=
\int \frac{\delta\{u\leq x\}}{\Phi(\beta_0,u)}\, \mathrm{d}(P_n-P)(u,\delta,z),\\
R_{n3}(x)
&=
\int \delta\{u\leq x\} \left(\frac{1}{\Phi_n(\beta_0,u)} -\frac{1}{\Phi(\beta_0,u)}\right)\,\mathrm{d}(P_n-P)(u,\delta,z),\\
R_{n4}(x)
&=
\int \delta\{u\leq x\}
\frac{[\Phi(\beta_0,u)-\Phi_n(\beta_0,u)]^2}{\Phi^2(\beta_0,u)\Phi_n(\beta_0,u)} \, \mathrm{d}P(u,\delta,z).
\end{split}
\]
For the dominating term in $T_{n2}$, we can write
\[
\begin{split}
B_n(x)+C_n(x)
&=
-\int \delta\{u\leq x\} \frac{\Phi_n(\beta_0,u)}{\Phi^2(\beta_0,u)}\ \mathrm{d}P(u,\delta,z) +
\int \frac{\delta\{u \leq x\}}{\Phi(\beta_0,u)}\ \mathrm{d}P_n(u,\delta,z)\\
&=
\frac{1}{n} \sum_{i=1}^n
\xi(T_i,\Delta_i,Z_i;x),
\end{split}
\]
where
\[
\xi(t,\delta,z;x)
=
-\int \gamma\{u \leq x\}
\frac{\{t\geq u\}\mathrm{e}^{\beta_0'z}}{\Phi^2(\beta_0,u)}\,
\mathrm{d}P(u,\gamma,y)+ \frac{\delta\{t\leq x\}}{\Phi(\beta_0,t)}.
\]
Using~\eqref{eq:relationPlambda}, we conclude that
\[
\xi(t,\delta,z;x)
=
-\mathrm{e}^{\beta_0'z}\int_0^{x\wedge t} \frac{\lambda_0(u)}{\Phi(\beta_0,u)}\,\mathrm{d}u
+
\frac{\delta\{t\leq x\}}{\Phi(\beta_0,t)}.
\]
For the remainder terms, it follows by Lemma~\ref{lemma:jon}, that
for any sequence $a_n=o(1)$,
\begin{equation}
\label{eq:bound Rn3}
\sup_{0\leq x\leq M}
\left|
R_{n3}(x)
\right|
=
\O_p(n^{-1}a_n^{-1}).
\end{equation}
To treat $R_{n4}$, note that
\[
\left| R_{n4}(x) \right|\leq \frac{1}{\Phi^2(\beta_0,M)}\frac{1}{\Phi_n(\beta_0,M)}\sup_{x\in\mathbbm{R}} |\Phi_n(\beta_0,x)-\Phi(\beta_0,x)|^2,
\]
so that by~\eqref{eq:rate Phin} and~\eqref{eq:D1D2Phin bound},
\[
\sup_{0\leq x\leq M}\left| R_{n4}(x)\right|=\O_p(n^{-1}).
\]
Together with~\eqref{d_n,1} and~\eqref{eq:bound Rn3}, this proves the theorem.
\end{proof}

In the special case of no covariates, i.e., $\beta_0=\hb=0$, it follows that
\[
\Phi(\beta_0,x)=1-H(x)
\]
and

\begin{align*}
\xi(t,\delta,z;x)
&=
-\mathrm{e}^{\beta_0'z}\int_0^{x\wedge t} \frac{\lambda_0(u)}{\Phi(\beta_0,u)}\,\mathrm{d}u
+
\frac{\delta\{t\leq x\}}{\Phi(\beta_0,t)}
=
-\int_0^{x\wedge t} \frac{\mathrm{d}H^{uc}(u)}{[1-H(u)]^2}+\frac{\delta\{t\leq x\}}{1-H(t)}.
\end{align*}

This means that Theorem~\ref{theorem:linearization} retrieves a result similar to Lemma 2.1 in Lo et al.~(1989).

The rate at which the error term $R_n$ tends to zero becomes faster as $a_n$ tends to zero more slowly.
If $a_n=1/\log n$, we obtain the same rate as the error term in Lemma 2.1 in Lo et al.~(1989).
However, they obtain the order $\O(n^{-1}\log n)$ almost surely, whereas Theorem~\ref{theorem:linearization},
with the choice $a_n=1/\log n$, only provides this order in probability.
Also, the sequence $a_n$ may be chosen to converge to zero arbitrarily slowly.
This means that the order $\O_p(n^{-1}a_n^{-1})$ of $R_n$ is arbitrarily close to $\O_p(n^{-1})$.

Using a linear representation for $\hat{\beta}-\beta_0$, a full linearization for the Breslow estimator can
be obtained.
Such a linear representation can be deduced from the proof of Theorem 3.2 in Tsiatis (1981) or
from an application of Theorem 2.11 in Kosorok (2008); see also Section 4.2.1 in Kosorok (2008).
As a consequence, Theorem 1 together with the expansion of $\hat{\beta}-\beta_0$
can be used to establish a central limit theorem for the Breslow estimator,
as well as to estimate the limiting covariance structure,
by using plug-in estimators.
For example, the term $A_0$ in the linear expression can be estimated consistently by $A_n$ in~\eqref{a_n}.
\vskip 3mm

\noindent ACKNOWLEDGEMENTS

We thank the two anonymous reviewers for their valuable comments and suggestions that helped to improve
the original version of the paper.
\vskip 3mm

\noindent REFERENCES

\noindent Andersen, P.~K., Borgan, O., Gill, R.~D., Keiding, N. (1993). {\it Statistical Models Based on

Counting Processes}. New York: Springer.

\noindent Breslow, N., Crowley, J. (1974). A large sample study of the life table and product limit

estimates under random censorship. {\it Ann. Statist.}  2:437--453.

\noindent Burke, M.~D., Cs{\"o}rg{\H{o}}, S., Horv{\'a}th, L. (1981). Strong approximations of some biometric

estimates under random censorship. {\it Z. Wahrsch. Verw. Gebiete} 56:87--112.

\noindent Cox, D.~R. (1972). Regression models and life-tabels (with discussion). {\it J. Roy. Statist. Soc. Ser. B.} 34:

187--220.

\noindent Cox, D.~R. (1975). Partial likelihood. {\it Biometrika} 62:269--276.

\noindent Cs{\"o}rg{\H{o}}, S., Horv{\'a}th, L. (1983). The rate of strong uniform consistency for the product-limit

estimator. {\it Z. Wahrsch. Verw. Gebiete} 62:411--426.

\noindent Kim, J., Pollard, D. (1990). Cube root asymptotics. {\it Ann. Statist.} 18:191--219.

\noindent Kosorok, M.~R. (2008). {\it Introduction to Empirical Processes and Semiparametric Inference}.

New York: Springer.

\noindent Lin, D.~Y. (2007). On the Breslow estimator. {\it Lifetime Data Anal.} 13:471--480.

\noindent Lo, S.~H., Mack, Y.~P., Wang, J.~L. (1989). Density and hazard rate estimation for censored

data via strong representation of the Kaplan--Meier Estimator. {\it Probab. Th. Rel. Fields}

80:461--473.

\noindent Lo, S.~H., Singh, K. (1985). The product-limit estimator and the bootstrap: Some asymptotic

representations. {\it Prob. Th. Rel. Fields} 71:455--465.

\noindent Lopuha\"{a}, H.~P., Nane, G.~F. (2013). Shape constrained nonparametric estimators of the

baseline distribution in Cox proportional hazards model. {\it To appear in Scand. J. Statist.}

\noindent Oakes, D. (1977). The asymptotic information in censored survival data. {\it Biometrika} 64:441--

448.

\noindent Slud, E.~V. (1982). Consistency and efficiency of inferences with the partial likelihood.

{\it Biometrika} 69:547--552.

\noindent Tsiatis, A. (1981). A large sample study of Cox's regression model. {\it Ann. Statist.} 9:93--108.

\noindent van der Vaart, A.~W., Wellner, J.~A. (1996). {\it Weak Convergence and Empirical Processes}.

New York: Springer.

\end{document}